\begin{document}
\newtheorem{theorem}{Theorem}
\newtheorem{lemma}[theorem]{Lemma}
\newtheorem{corollary}[theorem]{Corollary}
\newtheorem{definition}[theorem]{Definition}
\newtheorem{example}[theorem]{Example}
\pagenumbering{roman}
\renewcommand{\thelemma}{\thesection.\arabic{lemma}}
\renewcommand{\thedefinition}{\thesection.\arabic{definition}}
\renewcommand{\theexample}{\thesection.\arabic{example}}
\renewcommand{\theequation}{\arabic{equation}}
\newcommand{\mysection}[1]{\section{#1}\setcounter{equation}{0}
\setcounter{theorem}{0} \setcounter{lemma}{0}
\setcounter{definition}{0}}
\newcommand{\mrm}{\mathrm}
\newcommand{\beq}{\begin{equation}}
\newcommand{\eeq}{\end{equation}}

\newcommand{\ben}{\begin{enumerate}}
\newcommand{\een}{\end{enumerate}}

\newcommand{\beqa}{\begin{eqnarray}}
\newcommand{\eeqa}{\end{eqnarray}}

\newcommand {\non}{\nonumber}
\newcommand{\C}{\mbox{$\mathbb{C}$}}
\newcommand{\rank}{\text{rank}}
\title
{\bf Analogue of the Cauchy--Schwarz Inequality for Determinants:\\ A Simple Proof }
\author
{Avram Sidi\\
Computer Science Department\\
Technion - Israel Institute of Technology\\ Haifa 32000, Israel\\
e-mail:  asidi@cs.technion.ac.il\\
https://asidi.cswp.cs.technion.ac.il/}
\date{\today}
\maketitle \thispagestyle{empty}
\newpage
\begin{abstract}\sloppypar
In this note, we present a simple proof of an analogue of the Cauchy--Schwarz inequality relevant to
products of determinants. Specifically, we show that
$$ |\det(A^*MB)|^2\leq \det(A^*MA)\cdot \det(B^*MB),\quad A,B\in \C^{m\times n},$$
where $M\in\C^{m\times m}$ is hermitian positive definite. Here $m$ and $n$ are arbitrary.
In case $m\leq n$, equality holds trivially.
Equality holds when $m>n$ and $\text{rank}(A)=\text{rank}(B)=n$ if and only if the columns   of $A$ and the columns of $B$ span the same subspace of $\C^m$.

\end{abstract}
\vspace{5cm} \noindent {\bf Mathematics Subject Classification
2020:} 15A15

\vspace{1cm} \noindent {\bf Keywords and expressions:} Cauchy--Schwarz inequality, determinants.

 \thispagestyle{empty}
\newpage
\pagenumbering{arabic}
\setcounter{equation}{0}
\setcounter{theorem}{0}

The Cauchy--Schwarz inequality for vectors in $\C^n$ states that, if $x,y\in \C^n$,
and $(a,b)$ is the inner product in $\C^n$, then
$$ |(x,y)|^2\leq (x,x)\cdot (y,y),$$ with equality if and only if $x$ and $y$ are linearly dependent.
 There are many analogues of this theorem in different settings, and these can be found in
many books and papers on linear algebra and related subjects. In this work, we prove an analogue of this inequality that concerns  determinants. Our result here also follows from
Marcus and Moore \cite{Marcus:1981:DFC}. (See the remarks at the end of this note.)

We begin with the following lemma:

\begin{lemma}\label{le} Let $U$ and $V$ be two rectangular unitary matrices in $\C^{m\times n}$ with $m>n$, in the sense
$$U^*U=I_n,\quad V^*V=I_n.$$
Then
\beq \label{eqer} |\det(U^*V)|\leq 1.\eeq
Equality holds if and only if the columns of $U$ and the columns of $V$ span the same subspace of $\C^m$.
\end{lemma}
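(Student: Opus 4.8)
The plan is to enlarge $U$ to a square unitary matrix and read off the constraint this places on $U^*V$. Since the $n$ columns of $U$ form an orthonormal set in $\mathbb{C}^m$ and $m>n$, they extend (by Gram--Schmidt, say) to an orthonormal basis of $\mathbb{C}^m$; gathering the extra $m-n$ vectors into a matrix $\tilde U\in\mathbb{C}^{m\times(m-n)}$, the matrix $W:=[\,U\mid\tilde U\,]$ is unitary, so $W^*W=WW^*=I_m$.

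Next I would use that $W^*V$ inherits orthonormal columns from $V$, namely $(W^*V)^*(W^*V)=V^*WW^*V=V^*V=I_n$. Writing $W^*V=\begin{pmatrix}U^*V\\ \tilde U^*V\end{pmatrix}$ in block form and comparing blocks yields the identity
\[ (U^*V)^*(U^*V)+(\tilde U^*V)^*(\tilde U^*V)=I_n . \]
The matrix $(\tilde U^*V)^*(\tilde U^*V)$ is hermitian positive semidefinite, so $I_n-(U^*V)^*(U^*V)$ is positive semidefinite; equivalently, every eigenvalue of the hermitian positive semidefinite matrix $(U^*V)^*(U^*V)$ lies in $[0,1]$. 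Taking determinants, $|\det(U^*V)|^2=\det\bigl((U^*V)^*(U^*V)\bigr)$ is a product of numbers in $[0,1]$ and hence at most $1$, which is \eqref{eqer}.

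For the equality case, suppose $|\det(U^*V)|=1$. Then $\det\bigl((U^*V)^*(U^*V)\bigr)=1$ while all eigenvalues lie in $[0,1]$, which forces each eigenvalue to equal $1$; since the matrix is hermitian, $(U^*V)^*(U^*V)=I_n$, and the identity above gives $(\tilde U^*V)^*(\tilde U^*V)=0$, hence $\tilde U^*V=0$. Thus every column of $V$ is orthogonal to the column space of $\tilde U$, i.e.\ lies in its orthogonal complement, which — since the columns of $U$ and $\tilde U$ together form an orthonormal basis of $\mathbb{C}^m$ — is precisely the column space of $U$; as $\rank(U)=\rank(V)=n$, these two $n$-dimensional subspaces coincide. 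Conversely, if the columns of $U$ and of $V$ span the same subspace, then $V=UR$ for some $R\in\mathbb{C}^{n\times n}$, and $I_n=V^*V=R^*R$ shows $R$ is unitary; since $U^*V=R$, we get $|\det(U^*V)|=|\det R|=1$.

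I do not anticipate a real obstacle: the only ingredients needing any care are the standard extension of an orthonormal set to an orthonormal basis and the elementary spectral fact that a hermitian positive semidefinite matrix with determinant $1$ and all eigenvalues at most $1$ must be $I_n$; the block-matrix identity then does all the work, and the same circle of ideas will feed directly into the proof of the main inequality.
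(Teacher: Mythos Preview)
Your proof is correct and follows a somewhat different path from the paper's. Both arguments begin by extending the columns of $U$ to an orthonormal basis of $\mathbb{C}^m$, but they diverge at the key inequality. The paper applies Hadamard's inequality to $U^*V$, bounding $|\det(U^*V)|$ by the product of the column norms $\|w_j\|$, and then shows each $\|w_j\|\leq 1$ because $\|w_j\|$ is the norm of the orthogonal projection of the unit vector $v_j$ onto the column space $S_U$. You instead use the block identity $(U^*V)^*(U^*V)+(\tilde U^*V)^*(\tilde U^*V)=I_n$ to bound the eigenvalues of $(U^*V)^*(U^*V)$ (equivalently, the singular values of $U^*V$) by $1$, and conclude via the determinant as their product. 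Your route is arguably cleaner: it bypasses Hadamard's inequality and yields the equality analysis in one stroke, since $|\det(U^*V)|=1$ forces $(U^*V)^*(U^*V)=I_n$ and hence $\tilde U^*V=0$ directly, whereas the paper must argue separately that some $\|w_p\|<1$ whenever $S_U\neq S_V$. The paper's approach, on the other hand, stays closer to elementary column-by-column reasoning and makes the underlying geometry---projections of the $v_j$ onto $S_U$---more explicit.
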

\begin{proof}
The matrices $U$ and $V$ have the following columnwise partitionings:
$$ U=[u_1|u_2|\cdots|u_n],\quad u_i^*u_j=\delta_{i,j};\quad
V=[v_1|v_2|\cdots|v_n],\quad v_i^*v_j=\delta_{i,j}.$$
Then $W=U^*V\in \C^{n\times n}$ and the $(i,j)$ element of  $W$ is $u_i^*v_j$. Therefore, $U^*V$ has the columnwise partitioning
\beq \label{eqzx} U^*V=W=[w_1|w_2|\cdots|w_n];\quad w_j=[u_1^*v_j,u_2^*v_j,\ldots,u_n^*v_j]^T,\quad j=1,2,\dots,n, \eeq
and
\beq \label{eqbn} \|w_j \|^2=\sum_{i=1}^n|u_i^*v_j|^2,\quad j=1,2,\ldots,n.\footnote{We denote by $\|z\|$, $z\in \C^q$, the standard $l_2$ norm of $z$, in every dimension $q$. Thus $\|z\|=\sqrt{z^*z}$ for all $q$.}\eeq
Thus, by Hadamard's inequality,\footnote{Hadamard's inequality: If $H$ is an $s\times s$ matrix with columnwise partitioning $H=[h_1|h_2|\cdots|h_s]$, then $$|\det H|\leq \prod^s_{j=1}\|h_j\|,\quad
     \|h_j\|=\sqrt{h_j^*h_j},\quad j=1,\ldots,s.$$
      A proof of Hadamard's inequality,  can be found in Horn and Johnson \cite[p. 477]{Horn:1985:MA}.} there holds
   \beq \label{eqcv} |\det (U^*V)|=|\det W|\leq \prod^n_{j=1}\|w_j\|.\eeq

Let us now add to the set of the $n$  (orthonormal) vectors  $u_1,u_2,\ldots,u_n$ the $m-n$  (orthonormal) vectors $u_{n+1},u_{n+2},\ldots, u_m$, such that
$u_1,\ldots,u_m$ is an orthonormal basis for $\C^m$, hence for every vector $x\in \C^m$,  we have
$$x=\sum^m_{i=1} (u_i^*x)u_i\quad \text{and}\quad \|x\|^2=\sum^m_{i=1} |u_i^*x|^2.$$
Next, let us  define
 $$ S_U=\text{span}\{u_1,u_2,\ldots,u_n\} \quad  \text{and}\quad
   S_U^{\perp}=\text{span}\{u_{n+1},u_{n+2},\ldots,u_m\}.$$
 Here $S_U$ is the column space of  $U$ and $S^{\perp}_U$ is  the orthogonal complement of $S_U$, and every vector $x\in \C^m$ is of the form
  $$x=\hat{x}+\tilde{x};\quad \hat{x}=\sum^n_{i=1} (u_i^*x)u_i\in S_U,\quad \tilde{x}=\sum^m_{i=n+1} (u_i^*x)u_i\in S^{\perp}_U;\quad
  \hat{x}^*\tilde{x}=0,$$
  therefore,$$ \|x\|^2=\|\hat{x}\|^2+\|\tilde{x}\|^2;\quad
  \|\hat{x}\|^2=\sum^n_{i=1} |u_i^*x|^2,\quad \|\tilde{x}\|^2=\sum^m_{i=n+1} |u_i^*x|^2.$$
  Then, for
 $v_j$, the $j^{\text{th}}$ column of the matrix $V$,  we have
 $ v_j=\hat{v}_j+\tilde{v}_j$, where
$$\|\hat{v}_j\|^2=\sum^n_{i=1} |u_i^*v_j|^2=\|w_j\|^2\quad \text{by \eqref{eqbn}},\quad \|\tilde{v}_j\|^2=\sum^m_{i=n+1}|u_i^*v_j|^2.$$
Therefore,
 \beq \label{eqkl} 1=\|v_j\|^2=\|w_j\|^2+\|\tilde{v}_j\|^2\quad \Rightarrow\quad \|w_j\|\leq \|v_j\|=1;\quad j=1,2,\ldots,n.\eeq
This forces $\prod^n_{j=1}\|w_j\|\leq 1$ in  \eqref{eqcv}, thus proving \eqref{eqer}.

 Let us now denote the column space of $V$ by $S_V$. That is,
$$ S_V=\text{span}\{v_1,v_2,\ldots,v_n\}.$$
and let us assume that $S_V\neq S_U$. Then,  at least one of the vectors $v_1,v_2,\ldots, v_n$, say $v_p$,  does not belong to $S_U$, and this implies that
$$v_p=\hat{v}_p+\tilde{v}_p,\quad \tilde{v}_p\neq0.$$
As a result,
\beq\label{eqmn} 1=\|v_p\|^2=\|w_p\|^2+\|\tilde{v}_p\|^2> \|w_p\|^2\quad \Rightarrow\quad
\|w_p\|<\|v_p\|=1 .\eeq
As a result of \eqref{eqkl} and \eqref{eqmn}, we have
$\prod^n_{j=1}\|w_j\|<1$ in  \eqref{eqcv},
which forces  strict inequality in  \eqref{eqer}.

We now show that equality holds in \eqref{eqer} when  $S_U=S_V$.
In this case,  each $v_i$ is a linear combination of the $u_j$. That is,
$$V= U\Sigma,\quad \text{for some $\Sigma\in \C^{n\times n}$}.$$
Consequently,
$$ I_n=V^*V=(U\Sigma)^*(U\Sigma)=\Sigma^*(U^*U)\Sigma=\Sigma^*\Sigma\quad \Rightarrow\quad
\Sigma^*\Sigma=I_n,$$
that is, $\Sigma$ is a unitary matrix in $\C^{n\times n}$. Consequently,
$$ U^*V=U^*(U\Sigma)=(U^*U)\Sigma=\Sigma\quad \Rightarrow\quad \det(U^*V)=\det \Sigma,$$
from which,
$$|\det(U^*V)|=|\det \Sigma|=1. $$

This completes the proof.
\end{proof}

\begin{theorem}\label{th1} Let $A,B\in \C^{m\times n}$, with $m,n$ arbitrary. Then
\beq \label{eq1} |\det(A^*B)|^2\leq \det(A^*A)\cdot \det(B^*B).\eeq
\begin{enumerate}
\item
 If  $m<n$,  equality holds in \eqref{eq1}, both sides vanishing there.
\item
If $m=n$, equality holds in \eqref{eq1}.
\item
(a)\,If $m>n$ and $\text{\em rank}(A)<n$ or
 $\text{\em rank}(B)<n$, equality holds in  \eqref{eq1}, both sides vanishing there.

 (b)\,If $m>n$ and $\text{\em rank}(A)=\text{\em rank}(B)=n$, equality holds in  \eqref{eq1} if and only if
 the columns of $A$ and the columns of $B$ span the same subspace of $\C^m$.
 \end{enumerate}
\end{theorem}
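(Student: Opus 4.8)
The plan is to dispose of the degenerate cases first and then reduce the genuine case $m>n$ with $\rank(A)=\rank(B)=n$ to Lemma~\ref{le}. For item~1 ($m<n$): each of $A^*A$, $B^*B$, $A^*B$ is an $n\times n$ matrix whose rank is at most $m<n$, so all three determinants vanish and \eqref{eq1} reads $0\le 0$. For item~2 ($m=n$): multiplicativity of the determinant gives $\det(A^*B)=\overline{\det A}\,\det B$, $\det(A^*A)=|\det A|^2$, $\det(B^*B)=|\det B|^2$, so both sides of \eqref{eq1} equal $|\det A|^2|\det B|^2$. For item~3(a): if $\rank(A)<n$ then $\rank(A^*A)\le \rank(A)<n$ and $\rank(A^*B)\le \rank(A)<n$, whence $\det(A^*A)=0$ and $\det(A^*B)=0$, so \eqref{eq1} again reads $0\le 0$; the subcase $\rank(B)<n$ is symmetric.

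For item~3(b) assume $m>n$ and $\rank(A)=\rank(B)=n$. By the reduced QR factorization, write $A=UR$ and $B=VS$ with $U,V\in\C^{m\times n}$ having orthonormal columns ($U^*U=V^*V=I_n$) and $R,S\in\C^{n\times n}$ upper triangular; since $\rank(A)=\rank(B)=n$, the matrices $R$ and $S$ are invertible. Then $U$ and $V$ are rectangular unitary matrices in $\C^{m\times n}$ of the kind treated in Lemma~\ref{le}, and because $R$ and $S$ are invertible the column space of $U$ coincides with that of $A$ and the column space of $V$ coincides with that of $B$.

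Now substitute $A=UR$, $B=VS$. Multiplicativity of the determinant yields
\beq \det(A^*A)=|\det R|^2,\quad \det(B^*B)=|\det S|^2,\quad \det(A^*B)=\overline{\det R}\,\det(U^*V)\,\det S, \eeq
so that \eqref{eq1} is equivalent to
$$ |\det R|^2\,|\det(U^*V)|^2\,|\det S|^2\ \le\ |\det R|^2\,|\det S|^2. $$
Since $|\det R|^2|\det S|^2>0$, this is exactly the assertion $|\det(U^*V)|^2\le 1$ of Lemma~\ref{le}, and it holds with equality precisely when $|\det(U^*V)|=1$. By Lemma~\ref{le} the latter occurs if and only if the columns of $U$ and the columns of $V$ span the same subspace of $\C^m$, i.e., if and only if the columns of $A$ and the columns of $B$ span the same subspace of $\C^m$. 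This proves \eqref{eq1} together with its equality characterization.

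The only point needing a little care is the transfer of the equality condition through the factorization: one must observe that $A$ and $U$ (respectively $B$ and $V$) have the same column space because $R$ (respectively $S$) is invertible. Apart from that, the argument is just bookkeeping with $\det$ and with ranks; there is no serious obstacle, since in the nontrivial regime the entire content of the theorem is carried by Lemma~\ref{le}.
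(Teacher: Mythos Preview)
Your proof is correct and follows essentially the same approach as the paper: disposing of the degenerate cases by rank/multiplicativity considerations, and in the nontrivial case $m>n$ with full column rank using the reduced QR factorizations of $A$ and $B$ to reduce \eqref{eq1} to the inequality $|\det(U^*V)|\le1$ of Lemma~\ref{le}, with the equality condition transferred via the observation that $A,U$ (resp.\ $B,V$) share the same column space. The only differences are notational.
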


\begin{proof} We start by noting that all three matrices $A^*A$, $B^*B$, and $A^*B$ are $n\times n$.

\begin{enumerate}
\item
If $m<n$, each one of the   matrices $A^*A$, $B^*B$, and $A^*B$ is of rank at most $m$, hence is  singular.
Therefore,  \eqref{eq1} holds, both sides vanishing there.
\item If $m=n$, then $A$ and $B$ are square. Therefore,
    $$\det(A^*A)=(\det A^*)(\det A),\quad \det(B^*B)=(\det B^*)(\det B) $$ and
    $$ \det(A^*B)=(\det A^*)(\det B).$$
    The result in \eqref{eq1} now follows with equality there by invoking $\det C^*=\overline{\det C}$ for every square matrix $C$.
\item (a)\,If $m>n$,  and either $\text{rank}(A)<n$ (hence $A^*A$ is singular) or $\text{rank}(B)<n$ (hence $B^*B$ is singular), we
    have that $A^*B$ is singular as well.
    Therefore,  \eqref{eq1} holds with equality, both sides vanishing there.\\ \\
    (b)\,If $m>n$ and $\text{rank}(A)=\text{rank}(B)=n$, we proceed as follows:\\
   Consider the QR factorizations of $A$ and $B$, namely,
      $$ A=Q_AR_A,\quad B=Q_BR_B;\quad Q_A,Q_B\in \C^{m\times n},\quad R_A,R_B\in\C^{n\times n},$$
      where $Q_A$ and $Q_B$ are unitary in the sense that
      $$Q_A^*Q_A=I_n,\quad Q_B^*Q_B=I_n,$$ and $R_A$ and $R_B$ are upper triangular square matrices with nonzero diagonal elements. Now, it is easy to verify that
      $$ A^*A=R_A^*R_A,\quad B^*B=R_B^* R_B,\quad A^*B=R_A^*(Q_A^*Q_B)R_B.$$
      Note that $Q_A^*Q_B\in \C^{n\times n}$, just as $R_A,R_B$.
      Therefore, since $R_A$ and $R_B$ are upper triangular matrices with nonzero diagonal elements,
      $$  \det(A^*A)=(\det R_A^*)( \det R_A)=| \det R_A|^2,$$ and
      $$\det(B^*B)=(\det R_B^*) (\det R_B)=| \det R_B|^2.$$
      Next,
      $$ \det(A^*B)=(\det(Q_A^*Q_B))(\det R_A^*) (\det R_B).$$
      Therefore,
      $$
    |\det(A^*B)|=|\det(Q_A^*Q_B)| \,|\det  R_A|\, |\det R_B|,$$ 
     which implies that

     $$|\det(A^*B)|^2=|\det(Q_A^*Q_B)|^2 \det(A^*A) \det(B^*B).$$
    The rest of the proof can now be achieved   (i)\,by realizing that the subspaces spanned by $A$ and $Q_A$  are the same,
    and so are the subspaces spanned by  $B$ and $Q_B$, and (ii)\,by  applying Lemma~\ref{le} to   $|\det(Q_A^*Q_B)|$ since $m>n$. We leave the details to the reader.
 \end{enumerate}
 This completes the proof.
\end{proof}

By applying Theorem \ref{th1} to the matrices $\tilde{A}=M^{1/2}A$ and $\tilde{B}=M^{1/2}B$, where $M\in\C^{m\times m}$ is hermitian positive definite, we obtain the following general form of Theorem \ref{th1}.

\begin{theorem}
Let $A,B\in \C^{m\times n}$, with $m,n$ arbitrary, and let  $M\in\C^{m\times m}$ be hermitian positive definite. Then
\beq \label{eq2} |\det(A^*MB)|^2\leq \det(A^*MA)\cdot \det(B^*MB).\eeq
\begin{enumerate}
\item
 If  $m<n$,  equality holds in \eqref{eq2}, both sides vanishing there.
\item
If $m=n$, equality holds in \eqref{eq2}.
\item
(a)\,If $m>n$ and $\text{\em rank}(A)<n$ or
 $\text{\em rank}(B)<n$, equality holds in  \eqref{eq2}, both sides vanishing there.

 (b)\,If $m>n$ and $\text{\em rank}(A)=\text{\em rank}(B)=n$, equality holds in  \eqref{eq2} if and only if
 the columns of $A$ and the columns of $B$ span the same subspace of $\C^m$.
 \end{enumerate}
\end{theorem}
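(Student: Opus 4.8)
The plan is to reduce this statement to Theorem~\ref{th1} by absorbing $M$ into the matrices $A$ and $B$ through its positive-definite square root. Since $M\in\C^{m\times m}$ is hermitian positive definite, it has a unique hermitian positive definite square root $M^{1/2}\in\C^{m\times m}$; in particular $M^{1/2}$ is invertible and satisfies $(M^{1/2})^*=M^{1/2}$ and $M^{1/2}M^{1/2}=M$. Put $\tilde A=M^{1/2}A$ and $\tilde B=M^{1/2}B$, both in $\C^{m\times n}$. Using only the hermiticity of $M^{1/2}$ together with $M^{1/2}M^{1/2}=M$, one checks directly that
$$\tilde A^*\tilde A=A^*MA,\qquad \tilde B^*\tilde B=B^*MB,\qquad \tilde A^*\tilde B=A^*MB.$$
Feeding these three identities into inequality \eqref{eq1} of Theorem~\ref{th1}, applied to the pair $\tilde A,\tilde B$, produces \eqref{eq2} at once.

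For the case analysis, the point is that left multiplication by the invertible matrix $M^{1/2}$ preserves rank and is a bijection of $\C^m$ onto itself. Hence $\rank(\tilde A)=\rank(A)$ and $\rank(\tilde B)=\rank(B)$, so the four regimes ($m<n$; $m=n$; $m>n$ with $\rank(A)<n$ or $\rank(B)<n$; $m>n$ with $\rank(A)=\rank(B)=n$) for $(\tilde A,\tilde B)$ coincide with those for $(A,B)$. Items~1, 2, and 3(a) therefore follow verbatim from the corresponding items of Theorem~\ref{th1}.

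For item~3(b), Theorem~\ref{th1} gives that, when $m>n$ and $\rank(\tilde A)=\rank(\tilde B)=n$, equality holds in \eqref{eq2} if and only if the column space of $\tilde A$ equals the column space of $\tilde B$. Now the column space of $\tilde A=M^{1/2}A$ is the image under $M^{1/2}$ of the column space of $A$, and similarly for $B$; since $M^{1/2}$ is a bijection, these images agree if and only if the column spaces of $A$ and $B$ themselves agree. This converts the equality condition back into the statement that the columns of $A$ and the columns of $B$ span the same subspace of $\C^m$. I foresee no genuine obstacle; the only spot that merits an explicit line is the remark that the invertibility of $M^{1/2}$ makes the passage between ``$\tilde A$ and $\tilde B$ span the same subspace'' and ``$A$ and $B$ span the same subspace'' a true equivalence, not merely a one-directional implication.
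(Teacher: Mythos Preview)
Your proof is correct and follows exactly the route the paper takes: set $\tilde A=M^{1/2}A$, $\tilde B=M^{1/2}B$ and invoke Theorem~\ref{th1}, using the invertibility of $M^{1/2}$ to transfer ranks and column spaces back and forth. The paper compresses all of this into a single sentence, whereas you spell out the rank preservation and the column-space equivalence explicitly, but there is no difference in substance.
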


\noindent{\bf Remarks.}
\begin{enumerate}
\item
 The proof of the Cauchy--Schwarz inequality in $\C^n$ makes use of the fact that the inner product $(x,y)$ in $\C^n$  is bilinear  in $x$ and $y$. Because  $X^*MY$   is bilinear in $X$ and $Y$, one might think that the proof of the
Cauchy--Schwarz inequality in $\C^n$ can be applied to $\det(A^*MB)$, $\det(A^*MA)$, and
$\det(B^*MB)$ to obtain \eqref{eq2}. This is not possible, however, since $\det(A^*MB)$ does not have the bilinearity property; for example, $\det((A_1+A_2)^*MB)$ is not necessarily equal to $\det(A_1^*MB)+\det(A_2^*MB).$
\item
The problem treated here is a special case of a general problem  treated by Marcus and Moore in \cite[Eq. (1) on p. 111 and Theorem on p. 115] {Marcus:1981:DFC}. These authors  use  advanced  techniques to obtain the relations which must exist
between the $m\times m$ matrices $M_1,M_2,M_3,M_4$  so that the relation
$$ \det( A^*M_1B) \det( B^*M_2A) \leq \det( A^*M_3A) \det( B^*M_4B) $$
holds for all $m\times n$ matrices $A$ and $B$ when $m>n$.

Because we are restricting our problem to the special case in  which $M_1=M_2=M_3=M_4=M$,
$M$ being an $m\times m$ positive definite hermitian matrix, we are able to carry out our analysis by employing rather elementary techniques of linear algebra that are easily accessible.
\end{enumerate}


\end{document}